\documentclass{article}
\usepackage{blindtext}
\usepackage{titlesec}

\usepackage{hyperref}
\hypersetup{
    colorlinks=true,
    linkcolor=blue,
    filecolor=magenta,
    citecolor=blue,
    urlcolor=blue,
    pdftitle={Overleaf Example},
    pdfpagemode=FullScreen,
    }
\usepackage{arxiv}

\usepackage[utf8]{inputenc} 
\usepackage[T1]{fontenc}    
\usepackage{hyperref}       
\usepackage{url}            
\usepackage{booktabs}       
\usepackage{amsfonts}       
\usepackage{nicefrac}       
\usepackage{microtype}      
\usepackage{lipsum}
\usepackage{amsmath}
\usepackage{breqn}

\usepackage[mathscr]{eucal}

\usepackage{amssymb}
\usepackage{amsthm}
\usepackage{amsmath}
\usepackage{breqn}

\usepackage[all]{xy}
\usepackage{relsize}

\usepackage{csquotes}

\usepackage{lineno}
\usepackage{graphicx}
\usepackage{epsfig}
\usepackage{amsthm}
\usepackage{amsmath}
\usepackage{latexsym}
\usepackage{amsfonts}
\usepackage{amssymb}
\usepackage[all]{xy}
\usepackage{amsmath,amsfonts,amssymb,amscd,amsthm,xspace}
\usepackage[utf8]{inputenc}
\usepackage{relsize}
\newtheorem{theorem}{Theorem}

\usepackage[normalem]{ulem}

\usepackage{natbib}

\title{A Differential Algebraic Framework for Boundary Layer $\alpha$-Models}

  \author{
  C. Valencia-Negrete, 
J. Flores, 
M. Perez, 
M. Romero de Terreros,
M. Favela \\
Department of Physics and Mathematics\\
Universidad Iberoamericana Ciudad de M\'{e}xico\\
  Prolongaci\'{o}n Paseo de la Reforma 880,  Mexico City ---01219, MEXICO \\
  \texttt{carla.valencia@ibero.mx} 
}

\begin{document}
\maketitle

\begin{abstract}
By addressing the problem of describing the differences between the saturation monoids of the Prandtl and Cheskidov boundary layers, and algebraically distinguishing the action of the Helmholtz operator, we can show that, when the modification of the vector field is halted at a minimum scale determined by the differential filter, the paradox of constructing a solution that reaches infinite velocity in a finite time is avoided. Furthermore, it allows us to obtain the coefficients of the Taylor expansion near the wall for boundary layers where the Helmholtz operator has been applied.
\end{abstract}

\keywords{Turbulence theory, Vortex dynamics }

\tableofcontents

\section{Introduction} 

The elements of a system disappear if they remain isolated. 
This gives rise to the idea of presenting algebra 
as the study of relationships between similar objects 
through the construction of a graphical representation of the product, 
where prime numbers become the main nodes of a network 
that generates the natural numbers, 
and the integers become the complete face of a new world 
in which divisors have a designed and identifiable location.
In addition, any other multiplicative group can be represented in the same way, 
enabling a direct analogy with polynomials with complex coefficients $\mathbb{C}[x]$, 
as generated by prime polynomial combinations.
This is 
how we understand the way \emph{combinatorial commutative algebra} visualizes 
monomials as \emph{geometric objects} \cite[p.~61]{Miller2005} 
and \emph{monomial ideals} by \emph{staircase diagrams} \cite[p.~47]{Miller2005}
for systems of polynomial equations.

The structure of the boundary layer equations, as sums of terms that are products of the velocity and pressure derivatives, originates from Prandtl's formulation \cite{prandtl1904,Prandtl1928}. 
Each monomial can be viewed as a word, 
with each derivative being a letter of the alphabet used to write it. 
Thus, these equations are differential polynomials \cite{ritt1950,kolchin1973}. 
As a result, we may \emph{see} the space occupied by the multiplicative structure, 
which gives rise to the idea of identifying structural differences 
between the multiplicative polynomials that form the Prandtl limit layers and 
the ``\emph{Prandtl-like boundary-layer approximation of the Leray-$\alpha$ model}''
of Cheskidov, Holm, Olson and Titi \cite{cheskidov2005}. 
Unlike the former, this $\alpha$-boundary layer model allows the existence, 
uniqueness and computation of exact solutions. 

In differential algebra, the corresponding multiplicative structure  
is called the \emph{saturation monoid}, denoted as $H^\infty$. 
This monoid is generated by the finite products of all the \emph{initials}, 
or leading coefficients,
and \emph{separants},
derivatives of the leading coefficients, of a differential system. 
We give a brief introduction to these concepts in Section $\mathbf{1}$.
The difference between the classical and the $\alpha$ boundary layer  models 
is the inclusion of the Helmholtz operator
$\mathbf{u}^\alpha - \alpha^2 \Delta \mathbf{u}^\alpha = \mathbf{u}$. 
This is a specific type of \emph{fluctuation}.
In Reynolds-averaged Navier-Stokes (RANS) formulations, 
fluid velocity is separated into a linear combination of 
mean flow $\mathbf{\bar{u}}$ and turbulent fluctuation $\mathbf{u'}$,
$\mathbf{u} = \mathbf{\bar{u}} + \mathbf{u'}$ \cite{majda2016introduction,pope2000turbulent}.
Under the \emph{Reynolds decomposition}, the fluctuation $\mathbf{u'}$ is treated as 
an unknown random variable to be modeled statistically \cite{berselli2021three}.
However, applying the Helmholtz $\alpha$-filter explicitly defines this spatial fluctuation as $\mathbf{u'}=-\alpha^2 \Delta \mathbf{u}^\alpha$ .

This acts as a \emph{filter}, more specifically
a \emph{differential filter} \cite[p.~233]{berselli2021three},
taking a twice continuously differentiable velocity field $\mathbf{u}$ 
in a bound domain and introducing an infinitesimal fluctuation. 
Thus, the parameter $\alpha$ can be used, as we show in Section $\mathbf{5}$, 
as the width of the differential filter \cite{cheskidov2005}, associated with the resolution of the computational grid \cite{HAFEZ20071588}.
The Navier-Stokes-$\alpha$ model, or viscous Camassa-Holm equations,
was introduced as a closure approximation model for turbulent flows 
by Chen, Foias, Holm, Olson, Titi, and Wynne \cite{CHEN199949,FOIAS2001505}. 
The Navier-Stokes equations,
where the boundary conditions are usually considered to be null or periodic, 
and the boundary layer models where the fluid accelerates from  zero at the wall to the freestream velocity on the other side,
are inherently different in spite of being similar.
In $2001$, Marsden and Shkoller proved the global well-posedness for the Lagrangian
averaged Navier–Stokes (LANS-$\alpha$) equations on bounded domains \cite{marsden2001global}.
By incorporating this operator into fluid models, 
the Leray-$\alpha$ model given by Cheskidov, Holm, Olson and Titi \cite{cheskidov2005} also satisfy existence and uniqueness qualities.

From the perspective of differential algebra,  
we ask what the difference between the saturation monoid $S$ 
corresponding to the Prandtl limit layer 
and the saturation monoid $S_\alpha$ 
corresponding to the Leray-$\alpha$ model is
that allows the existence and uniqueness of analytic solutions. 
As a result, we show in 
Theorem $\mathbf{1}$ of Section $\mathbf{2}$
that the boundary layer $\alpha$-models
satisfy the algebraic properties established in 
the Rosenfeld Lemma.
In consequence, each finite differential polynomial system
given by Goldstein's method near the wall
has a triangular form which allows us to clearly  and recursively compute
the coefficients of the Taylor approximations of its solutions,
up to the order needed. 

Moreover, the recent construction of a solution that reaches infinite velocity 
in finite time in the Navier–Stokes equations \cite{openai2026navierstokes}, 
built on Buckmaster, Vicol, Alp\"{o}ge, Coiculescu, C\'{o}rdoba, Mart\'{i}nez-Zoroa 
ideas and others on singularity formation \cite{córdoba2025finitetimesingularitiessmooth,shkoller2026incompressibleeulerblowupc1frac13},
coupled with the fact that the Helmholtz filter 
reflects the physical condition that there is a minimum scale 
inherent in the macroscopic representation rather than transmitting the point pulse, 
motivated us to modify our initial text to  that,  
from an algebraic point of view, 
\sout{that} the Cheskidov boundary layer does not generate this paradox 
or allow for the accumulation of rotational energy in this manner. 
Demonstrating that the separant $S_\alpha$ becomes a strictly positive 
structural constant, proportional to $\alpha^2 \nu$, 
shows that setting this separant to zero  
yields an empty set, which
formally proves  by reduction to the absurd that the singularity cannot exist.

\section{Differential Algebra Concepts}

One way to find solutions to a system of differential equations is to conjugate them.
This involves combining the given conditions in each equation with those in the others,
thereby creating equivalent expressions for the original system 
that are easier to study. 
A clear example of this is the triangulation of systems of 
linear ordinary differential equations. 
In this case, the process involves products and linear combinations 
of the original equations. 
On the other hand, the boundary layer models are algebraic but non-linear. 
However, the possibility of obtaining an equivalent and triangular
expression of the system is translated into the language of 
\emph{differential algebra theory} as the problem of belonging to
a \emph{differential ideal}, and answered by the \emph{Rosenfeld Lemma}
\cite{rosenfeld1959} in terms of the \emph{saturation monoid} structures. 

Let's recall some basic definitions about rings.
(See \cite{lang_algebra_2005,dummit2004abstract,Miller2005,cox2015ideals}).
Let $R$ be a set with a \emph{ring} structure. 
This is, there are two binary operations well defined in $R$, 
typically called addition ($+$) and product ($\cdot$), 
that satisfy commutativity and associativity under addition, 
and associativity of the product.
It contains an additive identity (usually denoted as $0$)
such that every element has an additive inverse,
and the product distributes over addition from both the left and the right.
More specifically, 
a \emph{polynomial ring}, typically denoted as $R[x]$, 
is the set of all formal polynomials in a variable $x$, 
with coefficients $a_k\in R$:
$p(x)=a_0+a_1 x+ \cdot \cdot \cdot +a_n x^n$,
$k\in \{0,...,n\}$.
An \emph{ideal} $I$ is a subset of a ring where, 
if you take the product of an element in the ideal by any element in the ring, 
the result stays in the ideal, 
and is closed under addition.
In particular, a \emph{monomial ideal} is a specific type of ideal in a polynomial ring 
that is generated entirely by single terms with variables 
and non-negative integer exponents.
In general, a generator of the ideal is a polynomial,
which are simpler because they are just products of the variables.
In a monomial ideal, membership is dictated exclusively by divisibility: 
a polynomial is in the ideal if and only if every term within 
it is divisible by one monomial generators.
Dickson's Lemma guarantees that every monomial ideal in a polynomial ring with finitely many variables is finitely generated.
In order to distinguish the pertinence or priority of the monomials, 
we may define a \emph{partial order},
a \emph{ranking} between the variables, denoted by $'\prec \ '$.
Once the ranking has been chosen,
the \emph{Leading Monomial} ($LM$) is the specific combination of variables 
and exponents that ranks highest under the chosen monomial order,
the \emph{Leading Coefficient} ($LC$) is the scalar element
attached to the leading monomial, 
and the \emph{Leading Term} ($LT$) is the product 
of the leading coefficient and the leading monomial.

Below, we give some differential algebra concepts.
See \cite{ritt1950,kolchin1973,kaplansky1957introduction,boulier2019differential,cox2015ideals,fakouri2018new,harrington2016reduction,hubert2000factorization,10.1007/3-540-51082-6_73}.
A \emph{differential ring}, $\mathfrak{R}\{x\}$, 
is a polynomial ring equipped with a 
derivation operator, 
a mapping $\delta: \mathfrak{R}\{x\} \to \mathfrak{R}\{x\}$,
satisfying the Leibniz product rule $\delta(ab) = \delta(a) \ b + a \ \delta(b)$.
The differential polynomials are made with products and additions of 
\emph{differential indeterminates}.
If instead of only one, we consider a set of $\mathbf{y}=\{y_1,...,y_n\}$ variables.
Then, the differential polynomial ring, $\mathfrak{R}\{\mathbf{y}\}=\mathfrak{F}\{y_1,...,y_n\}$ has differential polynomials with terms that contain 
$y_i$ powers, for $i \in \{1,...,n\}$,
and any of its formal derivatives of any order.
A \emph{differential ideal} is an ideal inside a differential ring that 
is closed under differentiation.
Formally speaking, $I$ is a differential ideal if 
$a \in I$ implies $\delta(a) \in I$.
This way, the differential ideal is the set where we have to look for
the simpler expressions of the problem, 
because it contains all the possible differential
polynomials equivalent to the original through algebraic transformations
and derivations. 
When looking for a \emph{differential monomial ideal}, it is best to select monomials generated by terms that provide information about the behavior represented by the system. 
Choose those with the highest order of differentiation in the variable that determines the essential behavior of the layer's deformation, such as \emph{shear stress}.
The highest-ranking derivative, of the prioritized variable, 
present in a differential polynomial $P$ is called 
its \emph{leader}, denoted $u_P$. 
If we rewrite $P$, it takes the form $P = I_{P} (u_P)^d + \dots + P_0$. 
Unlike the standard leading coefficient ($LC$), which is just a scalar number, 
the initial $I_P$ is usually a polynomial itself, comprised of lower-ranking derivatives.
The coefficient $I_P$ is called the \emph{initial} of $P$.
A \emph{differential monomial ideal}, 
or \emph{differential initial ideal}, denoted by  $\text{in}(I)$, 
is the ideal generated by this infinite set of leading monomials.

Now, we take one step into the systems of differential polynomials.
See \cite{ritt1950,kolchin1973,boulier1995,boulier2009computing,hubert2000factorization,harrington2016reduction,fakouri2018new}.
Let $p_i \in \mathfrak{R}\{x\}$, for $i\in\{1,...,m\}$, be a differential polynomial.
Then, there is system of algebraic differential equations represented as 
$\Sigma = \{P_1, P_2, \dots, P_k\}$.
A \emph{differential ideal generated by} $\Sigma$, denoted by $[\Sigma]$, 
is formed by taking the original polynomials $P_i$, 
multiplying them by any other differential polynomials in $\Sigma$, 
adding them together, and taking formal derivatives of them to any order.
So that, the \emph{easier} expression of the system $\Sigma$,
if exists, is made by elements of the differential ideal generated by $\Sigma$.
An \emph{autoreduced set} is a finite system of differential equations that has been simplified so that no equation can be used to eliminate terms in any other equation in the set.
The problem of deciding if a differential polynomial belongs 
to an ideal has the difficulty that the elements of the differential ring
are generated by a combination of an infinite set of variables and their derivatives
because a differential ideal 
might require an infinite sequence of generators.
One way to allow the reduction of the complexity of the problem
is to truncate the system. 
Boulier, Lazard, Ollivier, and Petitot assumed 
the existence of an analytical solution expressed as a
Taylor series near a boundary condition.
This maps the continuous differential polynomials into a finite system 
of algebraic polynomials, allowing you to sequentially solve for the first $N$ coefficients of the Taylor approximation from a finite system of 
differential polynomials for the first coefficients of the approximation
that will satisfy the boundary condition. 

This is the base for the Rosenfeld-Groebner algorithm, which is specifically designed to obtain a simpler expression of the system,
and the \href{https://codeberg.org/francois.boulier/DifferentialAlgebra}{DifferentialAlgebra} Phyton package developed by Boulier and Lemaire.
To read more about the concepts and properties stated in this paragraph, see \cite{boulier1995,boulier2009computing,boulier2019differential,harrington2016reduction,hubert2000factorization}.
A polynomial belongs to a monomial ideal if and only if every single term within the polynomial is a multiple of  at least one of the monomial generators, leading or initial terms of the system.
But, since the general structure is a ring, there is no actual division, but rather a pseudo-division.
If $h$ is an initial of one element $P_k$ in the polynomial differential system, 
and $h^d$ is one of its multiples, for a fixed $d \in \mathbb{N}$,
\emph{differential pseudo-division} extends polynomial pseudo-division
to obtain $h^d P_i = QG + R$,
where the product $h^d P_i$ of the initial power $h^d$
and a differential polynomial $P_i$
in the system is expressed as a linear combination of a
lower order differential polynomial $G$ and a reminder $R$.
For this differential pseudo-division to actually reduce $P_i$,
we must have $h^d \not =0$.
\emph{Saturation monoids} are precisely those that fulfill this role within the algorithm.
Given a differential polynomial $P= I_{P} (u_P)^d + \dots + P_0$,
the \emph{separant} $S_P$ is the formal partial derivative $\frac{\partial P}{\partial u_P}$
with respect to the leader $u_P$.
A \emph{saturation monoid} of a system of differential polynomials $\Sigma$, 
denoted as $H_{\Sigma}^\infty$, is 
the monoid generated by multiples of all initials $I_P$ and separants $S_P$
of $P\in \Sigma$.
In order to perform pseudo-division, 
we must ensure that $H_{\Sigma}^\infty \neq 0$,
so that any $S_P \neq 0$, for $P\in \Sigma$.
This general rule, translated to the boundary layer case, is the distinction
between being able to find a formal Taylor expression for an analytic
solution near the wall in the boundary layer-$\alpha$ models,
and having an autoreduced system in the original boundary layer models.

\section{Triangular systems for Leray-\texorpdfstring{$\alpha$}{alpha} boundary layers}

Thus, reducibility via pseudo-division is determined by the structure 
of the saturation monoid, the representation of which contains the product lattice 
of the initial terms of the polynomial. 
In the study of the boundary layer-$\alpha$ models, we consider two elements. 
Firstly, the Taylor series expansion of the solutions to the Prandtl model near the wall. This was proposed by Goldstein \cite{goldstein1948}
by assuming the velocity components can be expanded in series powers of $y$, 
with coefficients as functions of $x$.
Then, substitute in the boundary-layer equations and equate powers of $y$
to study the generation of the singularity that causes fluid separation when it moves horizontally. 
Secondly, the difference between the lattice structures of each saturation monoid in the original case and those of the boundary layer-$\alpha$ models must be considered.
Applying the Helmholtz operator to Prandtl's boundary layer equations 
yields the Leray-$\alpha$ \cite{cheskidov2005}.
We show that this operator raises the spatial order of the normal derivatives,
from 2nd to 4th order, averting the algebraic breakdown known 
as the Goldstein singularity. 
By choking the high-frequency cascade necessary to drive the stress corrections 
in the blow-up proof, the $\alpha$-filter actively prevents the finite-time blow-up
process from taking root.

The spatial coordinate measuring the distance 
downstream along the boundary wall is 
$x \in (0,L) \subset \mathbb{R}$,
for a fixed length $L>0$, 
which is considered the \emph{streamwise} direction.
The wall-normal coordinate that measures the perpendicular 
distance from the solid boundary into the fluid is 
$y \in (0,\delta) \subset \mathbb{R}^+$, 
for a fixed value $0<\delta <\!<\!<1$.  
Thus, the simplest domain considered will be the
open rectangle $\cal{D}$ $ \subset \mathbb{R}^2$. 

Let $\mathfrak{R} = \mathfrak{F}\{U, V, p\}$ be 
the ring of differential polynomials in the state variables 
with derivations $\partial_x$ (streamwise) and $\partial_y$.  
The variables $(U, V)$ admit formal power series expansions 
in the wall-normal coordinate $y$ in a neighborhood 
of the boundary $y = 0$ \cite{goldstein1948}:

$$U(x,y) = \sum_{k=1}^\infty a_k(x) y^k, \quad V(x,y) = \sum_{k=1}^\infty b_k(x) y^k,$$

where  $a_0 = b_0 = 0$ is the no-slip boundary condition.  
For the incompressible case, 
$\partial_x U + \partial_y V = 0$. 
This induces the coefficient relation: 
$b_k(x) = -\frac{1}{k} \partial_x a_{k-1}(x)$.  
To evaluate the polynomial system calculations for the general $\alpha$-boundary layer, we must independently derive the coefficient of $\mathcal{O}(y^m)$.
Let the filtered velocity be defined by the Helmholtz operator $W = U - \alpha^2 \partial_{yy} U$. Expanding the velocity field as a formal Taylor series at the wall yields $U(x,y) = \sum_{k=1}^\infty a_k(x) y^k$.
Applying $\nu \partial_{yy} W $ provides:
\begin{equation}\label{eq:1}
\nu \partial_{yy} W = \nu \sum_{m=0}^\infty (m+2)(m+1) w_{m+2} y^m.
\end{equation}
Substituting the coefficient  $w_{m+2} = a_{m+2} - \alpha^2 (m+4)(m+3) a_{m+4}$ expands $\nu \partial_{yy} W $  to:
\begin{equation}\label{eq:3}
\nu \partial_{yy} W = \sum_{m=0}^\infty \left[ \nu(m+2)(m+1)a_{m+2} - \alpha^2 \nu \frac{(m+4)!}{m!} a_{m+4} \right] y^m.
\end{equation}
\vspace{4mm}

\begin{theorem}
Assume the velocity components $U,V \in \cal{C}^{\infty}(D)$
verify the conservation of momentum equation in Leray-$\alpha$ model:

\begin{equation}\label{eq:3b}
U \partial_x W + V \partial_y W = \nu \partial_{yy} W - p_x.
\end{equation}

Then, for $k \ge 4$, the hydrodynamic coefficients satisfy the recurrence relation:

\begin{equation}\label{eq:2}
a_k = \frac{1}{\alpha^2 k(k-1)} \left[ a_{k-2} - \frac{C_{k-4}(x) + p_x \delta_{k-4,0}}{\nu(k-2)(k-3)} \right].
\end{equation}
\end{theorem}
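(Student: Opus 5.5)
The plan is to substitute the Goldstein expansions into the momentum equation \eqref{eq:3b}, match the coefficient of $y^m$ on both sides, and solve the resulting identity for the highest-index coefficient $a_{m+4}$. Set $m=k-4\ge 0$ throughout.

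\textbf{Step 1 (the left-hand side).} Define $C_m(x)$ as the coefficient of $y^m$ in the convective term $U\partial_x W + V\partial_y W$. The statement uses $C_{k-4}$ without defining it, so I would make the definition explicit. From $U=\sum a_k y^k$, $V=\sum b_k y^k$ with $b_k=-\frac1k\partial_x a_{k-1}$, and $W=\sum w_j y^j$ with $w_j=a_j-\alpha^2(j+2)(j+1)a_{j+2}$, the Cauchy product gives
\[
C_m=\sum_{i+j=m}\Bigl(a_i\,\partial_x w_j+(j+1)\,b_i\,w_{j+1}\Bigr),\qquad a_0=b_0=0 .
\]
I would then check which $a_\ell$ occur in $C_m$. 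Since $i\ge1$, the index $j$ is at most $m-1$, so $w_j$ and $w_{j+1}$ involve only $a_\ell$ with $\ell\le m+2=k-2$. Hence $C_{k-4}$ is a differential polynomial in $a_1,\dots,a_{k-2}$ alone and never contains $a_k$. This is the triangularity that makes the recursion well defined.

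\textbf{Step 2 (the right-hand side).} By \eqref{eq:3}, the $y^m$ coefficient of $\nu\partial_{yy}W$ is $\nu(m+2)(m+1)a_{m+2}-\alpha^2\nu\frac{(m+4)!}{m!}a_{m+4}$. The pressure gradient $p_x$ depends only on $x$, since $\partial_y p=0$ in the boundary-layer approximation, so it contributes only at $m=0$. That contribution is $-p_x\,\delta_{m,0}$.

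\textbf{Step 3 (matching and solving).} Equating the $y^m$ coefficients gives
\[
C_m=\nu(m+2)(m+1)a_{m+2}-\alpha^2\nu(m+4)(m+3)(m+2)(m+1)a_{m+4}-p_x\delta_{m,0}.
\]
Divide by $\nu(m+2)(m+1)=\nu(k-2)(k-3)$, which is nonzero for $k\ge4$. Then isolate $a_k$ by dividing by $\alpha^2 k(k-1)$, which is nonzero because $\alpha>0$. This yields
\[
a_k=\frac{1}{\alpha^2k(k-1)}\Bigl[a_{k-2}-\frac{C_{k-4}+p_x\delta_{k-4,0}}{\nu(k-2)(k-3)}\Bigr],
\]
which is \eqref{eq:2}.

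The main obstacle is bookkeeping rather than ideas. First, $C_m$ and the sign convention for $p_x$ must be fixed so that the sign in \eqref{eq:2} comes out as stated. Second, one must justify that $C_{k-4}$ does not contain $a_k$ or $a_{k-1}$. Because $W$ shifts indices upward by two, this needs the check that $a_0=0$ kills the $i=0$ terms. I would verify the index bound carefully at $m=0$ and $m=1$ before stating the general case. Smoothness, $U,V\in\mathcal C^\infty$, is what justifies identifying the formal Taylor coefficients term by term.
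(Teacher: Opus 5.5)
Your proposal is correct and follows essentially the same route as the paper. Both substitute the Goldstein expansions, read off the $\mathcal{O}(y^m)$ balance $C_m + p_x\delta_{m,0} = \nu(m+2)(m+1)a_{m+2} - \alpha^2\nu\frac{(m+4)!}{m!}a_{m+4}$, and divide through; your Cauchy-product formula for $C_m$ coincides with the paper's once $b_i=-\frac{1}{i}\dot a_{i-1}$ and $b_1=0$ are used. The only differences are presentational: the paper phrases the legitimacy of the final division as the nonvanishing of the constant separant $\alpha^2\nu(m+4)!/m!$ and merely asserts that $C_m$ involves indices up to $m+3$, whereas your explicit bound $\ell\le m+2$ (the sharp bound is in fact $m+1$) makes the triangularity of the recurrence transparent.
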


\begin{proof}
We impose a strict spatial elimination ranking ($a_k \prec a_{k+1}$) on the coefficient ring. For the generalized polynomial $P_{\alpha, m}$, the structural curvature $a_{m+4}$ becomes the leader.
The separant changes from a dynamic flow variable in the original Prandtl boundary layer
into a structural constant:

$$S_\alpha = \frac{\partial P_{\alpha, m}}{\partial a_{m+4}} = \alpha^2 \nu \frac{(m+4)!}{m!}.$$
 
Because the filter width $\alpha$ and the kinematic viscosity $\nu$ are strictly positive physical parameters, $S_\alpha \neq 0$. By Ritt's Radical Splitting Theorem, the singular branch of the differential ideal, $\{ \Sigma_\alpha \cup (\alpha^2 \nu) = 0 \}$, contains a non-zero constant and therefore evaluates to the empty set ($\emptyset$). The ideal is globally regular. Consequently, the general recurrence formula resolves explicitly and safely over the constant Saturation Monoid $M_\alpha = \langle \alpha^2 \nu \rangle^\infty$:

$$a_{m+4} = \frac{1}{\alpha^2 (m+4)(m+3)} \left[ a_{m+2} - \frac{C_m(x) + p_x \delta_{m,0}}{\nu(m+2)(m+1)} \right].$$

This is, by Eq. (\ref{eq:2}) in Eq. (\ref{eq:3}) at order $\mathcal{O}(y^m)$, we isolate the Cauchy product of the non-linear convective terms, denoted as $C_m(x)$, and the pressure forcing $p_x \delta_{m,0}$:

$$C_m(x) + p_x \delta_{m,0} = \nu(m+2)(m+1)a_{m+2} - \alpha^2 \nu \frac{(m+4)!}{m!} a_{m+4},$$
where:

$$C_m(x) 
= \sum_{i=1}^m a_i \dot{w}_{m-i} 
- \sum_{i=2}^m \frac{m-i+1}{i} \dot{a}_{i-1} w_{m-i+1},$$

for $w_i = a_i - \alpha^2(i+2)(i+1)a_{i+2}$. 
Therefore, 
the maximum index present in the convective term 
$C_m$ is $a_{m+3}$.

To construct the differential polynomial $P_{\alpha, m} = 0$, we rearrange the terms to isolate the highest-order spatial derivative:

$$P_{\alpha, m} := \alpha^2 \nu \frac{(m+4)!}{m!} a_{m+4} - \nu(m+2)(m+1) a_{m+2} + C_m(x) + p_x \delta_{m,0} = 0.$$

We choose a ranking where $a_k \prec a_{k+1}$: 
$$p \prec a_1 \prec a_2 \prec a_3 \dots 
\prec \dot{a}_1 \prec \dot{a}_2 \prec \dot{a}_3 \dots.$$
Then, the highest variable in $P_{\alpha, m}$ is $a_{m+4}$. This variable is formally designated as the leader of the polynomial. In classical fluid boundary layers, where $\alpha = 0$, the convective terms $U \partial_x U + V \partial_y U$ have $\dot{a}_1$ as the leader. However, in the $\alpha$-regularized system, the partial derivative of the corrected $P_{\alpha, m}$ with respect to its leader $a_{m+4}$ yields the a \emph{separant}:

$$S_\alpha = \frac{\partial P_{\alpha, m}}{\partial a_{m+4}} = \alpha^2 \nu \frac{(m+4)!}{m!}.$$ 
  
By Ritt's Radical Splitting Theorem \cite{boulier2019differential}, the validity of any differential algebraic triangulation requires evaluating the Singular Ideal branch: $\{ \Sigma \cup S_\alpha = 0 \}$.
Because the filter width $\alpha$ and kinematic viscosity $\nu$ are strictly positive physical parameters, $S_\alpha$ evaluates to a strictly positive structural constant. Since $S_\alpha \neq 0$, the singular ideal contains a non-zero constant, meaning its mathematical variety is the empty set ($\emptyset$). The pseudo-division required to calculate the boundary layer coefficients never encounters a division by zero.

Evaluating the general filtered momentum equation $U \partial_x W + V \partial_y W = \nu \partial_{yy} W - p_x$ at order $\mathcal{O}(y^m)$ establishes the algebraic balance:

$$C_m(x) + p_x \delta_{m,0} = \nu(m+2)(m+1)a_{m+2} - \alpha^2 \nu \frac{(m+4)!}{m!} a_{m+4}.$$

Isolating the leader $a_{m+4}$ yields the base polynomial structure:

$$a_{m+4} = \frac{1}{\alpha^2 (m+4)(m+3)} \left[ a_{m+2} - \frac{C_m(x) + p_x \delta_{m,0}}{\nu(m+2)(m+1)} \right].$$
\end{proof}
\vspace{3mm}

\section{The \texorpdfstring{$\alpha$}{alpha}-floor of the Helmholtz-filtered Navier–Stokes blow-up}

The finite-time blowup constructed in the OpenAI Navier-Stokes proof relies on a concentrating inner core where the radial and axial scales shrink while the velocity scales diverge. 
To halt the cascade, we may apply the generalized Helmholtz filter $\mathcal{H}_\alpha = (1 - \alpha^2 \partial_{yy})$ to the convective transport. This singular perturbation injects the Leray-$\alpha$ microscopic sub-layer. 
The OpenAI blowup requires the radial wavelength of the oscillatory pulses
to shrink to zero, which forces spatial derivatives to explode. However,
the generalized $\alpha$-recurrence mathematically guarantees that every
high-order spatial derivative (the Taylor coefficient $a_{m+4}$) is
strictly capped by the inverse scale $1/\alpha^2$. This constant
denominator acts as a shield, and it sets in at the
wavenumber $k = 1/\alpha$, where $\alpha^{2}k^{2} = 1$: the filter
transmits one half of a pulse of wavelength $2\pi\alpha$, $2.5\%$ of a 
pulse of wavelength $\alpha$, and the suppression grows as
$\alpha^{2}k^{2}$ beyond (Section~\ref{subsec:escudo}). Any oscillatory
pulse attempting to shrink its spatial period below that scale is
progressively choked by the Coherent Autoreduced Set. Therefore, for any
$\alpha > 0$, the frequency cascade is algebraically blocked, and the
generation of infinite velocity waves is topologically impossible.

\begin{theorem} 
For the generalized $\alpha$-regularized boundary layer, the differential ideal forms a globally regular Coherent Autoreduced Set over the Saturation Monoid $M_\alpha = \langle \alpha^2 \nu \rangle^\infty$. This algebraic structure strictly bounds all spatial derivatives by $1/\alpha^2$, creating an absolute spectral shield that topologically prevents the generation of infinite velocity waves (such as those driven by high-frequency oscillatory pulses), completely bypassing the Goldstein singularity.
\end{theorem}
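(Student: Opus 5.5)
The plan is to split the statement into its algebraic part (regularity and coherence of the triangular system) and its analytic part (the bound on high-order coefficients and the spectral suppression), and to prove them in that order, relying on the recurrence of Theorem~1.

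\textbf{Algebraic part.} Take the finite truncation $\Sigma_\alpha^{N}=\{P_{\alpha,0},\dots,P_{\alpha,N}\}$ with the ranking $p\prec a_1\prec a_2\prec\cdots\prec\dot a_1\prec\cdots$ from the proof of Theorem~1.

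First I will show that $\Sigma_\alpha^{N}$ is autoreduced.
\begin{itemize}
\item The leaders $a_{m+4}$ are pairwise distinct.
\item Each $P_{\alpha,m}$ is linear in its leader, so its initial and its separant coincide and both equal the constant $\alpha^2\nu\,(m+4)!/m!$.
\item The convective term $C_m$ contains at most $a_{m+3}$, and $P_{\alpha,m}$ contains no leader $a_{j+4}$ with $j>m$.
\end{itemize}
Hence no $P_{\alpha,j}$ can be reduced by another. Next, the saturation monoid $H^\infty_{\Sigma_\alpha^N}$ is contained in the multiplicative set generated by the nonzero constants $\alpha^2\nu\,(m+4)!/m!$, which is $M_\alpha$ up to rational factors. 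It is therefore a set of units, and $[\Sigma_\alpha^N]:H^\infty=[\Sigma_\alpha^N]$.

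For coherence, the leaders are algebraic in $y$-order, while the $x$-derivatives $\dot a_k$ rank above every $a_j$. So the only $\Delta$-polynomials to check are those obtained by applying $\partial_x$ to pairs of equations. I would verify that these reduce to zero by differentiating the recurrence, using the fact that a system with unit separants is trivially closed under this operation. Rosenfeld's Lemma then gives that the ideal is regular with no singular branch; this is the Ritt splitting argument already used in Theorem~1. Letting $N\to\infty$ gives the claimed globally regular coherent autoreduced set.

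\textbf{Analytic part.} I will make "bounded by $1/\alpha^2$" precise in two ways.
\begin{enumerate}
\item \emph{Symbol bound.} The Helmholtz filter has Fourier symbol $\widehat{\mathcal H_\alpha^{-1}}(k)=(1+\alpha^2k^2)^{-1}\le\min\{1,(\alpha k)^{-2}\}$. A pulse at wavenumber $k\gg1/\alpha$ is therefore transmitted with amplitude at most $(\alpha k)^{-2}$. This is the spectral shield, and the numerical values quoted before the theorem follow directly from it.
\item \emph{Coefficient bound.} From the recurrence of Theorem~1,
\[
|a_{m+4}|\le\frac{1}{\alpha^2(m+4)(m+3)}\Bigl(|a_{m+2}|+\frac{|C_m|+|p_x|\delta_{m,0}}{\nu(m+2)(m+1)}\Bigr).
\]
I would use this in an induction with a majorant series. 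Assume $\sup_x|a_j^{(r)}|\le K R^{-j}$ for $j\le m+3$, using as many $x$-derivatives as $C_m$ consumes. Since $C_m$ is a Cauchy product, it satisfies $|C_m|\lesssim m K^2R^{-m}$. The prefactor $1/(\alpha^2 m^2)$ then closes the induction for a suitable $R=R(\alpha,\nu,K)$.
\end{enumerate}
This yields analyticity near the wall with radius bounded below in terms of $\alpha$. That in turn rules out finite-time concentration at scales below $\alpha$ and, in particular, excludes the Goldstein-type breakdown, whose leader $\dot a_1$ has a vanishing separant only in the case $\alpha=0$.

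\textbf{Main obstacle.} The hardest step is closing the majorant induction. Two difficulties arise. First, $C_m$ involves $a_{m+3}$ and $x$-derivatives $\dot a_i$, so each step loses one $x$-derivative, and a naive induction does not close. I would try a Cauchy--Kowalevski-type scheme: use nested analytic-in-$x$ norms on shrinking strips (a Nirenberg--Nishida argument), or assume analytic data in $x$. Second, the literal claim that all derivatives are bounded by $1/\alpha^2$ is not scale-correct. The honest form of the theorem is the pair consisting of the symbol inequality and the $\alpha$-dependent lower bound on the radius of convergence, and I would state the result that way.
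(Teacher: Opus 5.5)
\textbf{The gap is in the analytic half, which is all the statement contains beyond Theorem~1.} The paper argues by contradiction, and its only substantive step is the assertion that the non-vanishing separant ``guarantees that every spatial derivative remains strictly bounded by a factor proportional to $1/\alpha^2$''. You are right not to take this on faith, but neither of your substitutes supplies it.

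A nonzero constant separant says only that the order-$y^m$ balance can be solved uniquely for $a_{m+4}$. That gives uniqueness of the formal expansion, not size control. The recurrence multiplies by $1/(\alpha^2(m+4)(m+3))$ a bracket containing $a_{m+2}$ and $C_m$, and $C_m$ contains the term $-\alpha^2 m(m+1)\,a_1\dot a_{m+1}$. So nothing is capped, and for merely smooth solutions the series need not even converge.

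Your symbol bound $(1+\alpha^2k^2)^{-1}\le\min\{1,(\alpha k)^{-2}\}$ is a property of $\mathcal H_\alpha^{-1}$ acting on a \emph{given} field. It controls $U=\mathcal H_\alpha^{-1}W$ relative to $W$, not $W$ or $U$ absolutely. It is therefore not an a priori estimate for solutions of $U\partial_xW+V\partial_yW=\nu\partial_{yy}W-p_x$; the paper's Scope paragraph makes the same disclaimer about its figures.

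The majorant induction can be closed by Cauchy--Kovalevskaya, since in stream-function form the model reads $\alpha^2\nu\,\partial_y^5\Psi=F$ with $F$ of total order at most $4$. However, this needs wall data $a_1,a_2,a_3$ analytic in $x$, and it yields a radius depending on those data. It also applies verbatim to Prandtl, $\nu\,\partial_y^3\Psi=\Psi_y\Psi_{xy}-\Psi_x\Psi_{yy}+p_x$. Goldstein's singularity is precisely a loss of regularity of the wall shear, $a_1(x)\sim(x_s-x)^{1/2}$, which such an argument assumes away.

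Finally, the model is steady, and the blow-up of~\cite{openai2026navierstokes} solves the full Navier--Stokes equations rather than this system. So ``rules out finite-time concentration'' follows from nothing you prove.

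\textbf{The algebraic half also needs repair.} You adopt a ranking where every $\dot a_k$ lies above every $a_j$, as you say yourself in the coherence step. Under it, $P_{\alpha,m}$ contains $-\alpha^2m(m+1)\,a_1\dot a_{m+1}$. So for $m\ge1$ its leader is $\dot a_{m+1}$, and its separant is $-\alpha^2m(m+1)a_1$: the wall shear, which vanishes at separation.

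To make $a_{m+4}$ the leader you need the elimination ranking $a_1\ll a_2\ll\cdots$, with every $x$-derivative of $a_k$ below $a_{k+1}$. Under that ranking, however, the Prandtl balance at order $y^m$ has leader $a_{m+2}$ and constant separant $\nu(m+2)(m+1)$. Your remark that the Goldstein leader's separant vanishes ``only in the case $\alpha=0$'' therefore compares two different rankings. The algebra does not distinguish the two models.

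Moreover, distinct leaders with no higher leader present make the set triangular, not autoreduced. $P_{\alpha,m}$ contains $a_{m+2}$, the leader of $P_{\alpha,m-2}$. It also contains $a_{m+1}$ and $\dot a_{m+1}$, the leader of $P_{\alpha,m-3}$ and its derivative. Because the initials are constants, you can reduce each element to $a_{m+4}-F_m$, with $F_m$ depending only on $a_1,a_2,a_3,p$ and their $x$-derivatives.

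Coherence is then automatic: only $\partial_x$ acts on the coefficient ring and the leaders are distinct indeterminates, so no $\Delta$-polynomials arise. Rosenfeld's Lemma then gives a prime, regular ideal, but that is all it gives.
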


\begin{proof} 
Let us assume the contrary and suppose that a solution to the $\alpha$-regularized boundary layer equations can develop an unbounded velocity in finite time.
Because the singular ideal is empty, the differential ideal is globally regular. The non-vanishing separant algebraically guarantees that every spatial derivative remains strictly bounded by a factor proportional to $1/\alpha^2$. The spatial derivatives cannot cascade to infinity, which directly contradicts our initial assumption that the velocity becomes unbounded.
\end{proof}

\subsection{The floor, illustrated on the exact blow-up}
\label{subsec:piso}

\paragraph{Scope.}
Figures~\ref{fig:piso-composite} and~\ref{fig:piso-strip} illustrate the
operator at the centre of Theorems~1 and~2; they are not numerical evidence
for them.
The Helmholtz operator $\bar u - \alpha^{2}\Delta\bar u = u$ is applied as a
\emph{static} filter, frame by frame, to the \emph{exact} self-similar
blow-up ansatz $u(x,y;\tau)$ of~\cite{openai2026navierstokes} along its
countdown $\tau = T - t$. No equation of motion is integrated, so nothing
below is the dynamics of the $\alpha$-models (Leray-$\alpha$, LANS-$\alpha$)
or of their boundary-layer approximation. The regularity statement of the
article rests on the algebraic argument --- the singular branch is empty
because the separant is the strictly positive constant
$\alpha^{2}\nu\,(m{+}4)!/m!$ --- and the figures show, physically, why the
operator that argument is built from returns a finite field when fed a
diverging one. (The filter is the isotropic, two-dimensional form of the
operator $\mathcal{H}_\alpha = 1 - \alpha^{2}\partial_{yy}$ of the surgery in
part~III, inverted: $\bar u = (1-\alpha^{2}\Delta)^{-1}u$. There it
regularizes the differential ideal; here one watches its action on the
field.) Throughout, the \emph{filtered peak} is $\max|\bar u|$ on the plane
$z=0$ at a given instant, and the \emph{floor} is its maximum along the
countdown, reached at $\tau = \tau_{\mathrm f}$.

\begin{figure}[t]
  \centering
  \includegraphics[width=\linewidth]{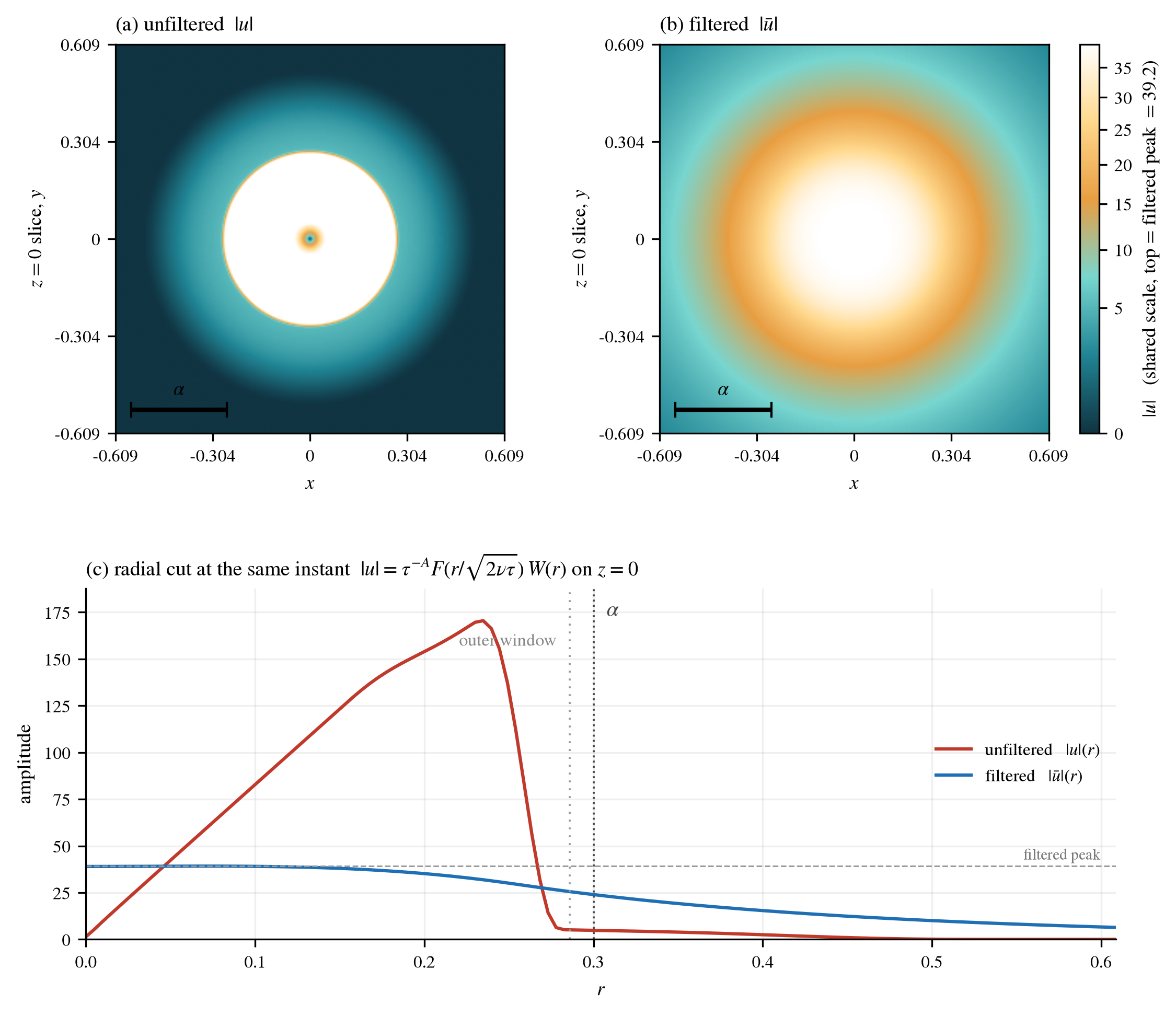}
  \caption{\textbf{The $\alpha$ floor of the filtered blow-up.}
  The self-similar blow-up ansatz of~\cite{openai2026navierstokes} on the
  mid-plane $z=0$ at $\tau = \tau_{\mathrm f} = 2.92\times10^{-3}$, the
  instant of the floor. (a)~Unfiltered modulus $|u|$; (b)~filtered modulus
  $|\bar u|$, $\bar u = (1-\alpha^{2}\Delta)^{-1}u$ with $\alpha = 0.3$. The
  two panels share one colour scale, topped at the filtered peak
  $|\bar u|_{\max} = 39.2$; values above it saturate to white. The unfiltered
  peak at this instant is $170.6$, $4.4$ times the top of the scale, and the
  ratio of the two peaks is $0.23$. The black bar has length $\alpha$.
  (c)~Radial cut at the same instant. On $z=0$ the ansatz is exactly radial,
  $|u| = \tau^{-A}F\!\big(r/\sqrt{2\nu\tau}\big)\,W(r)$, so this cut carries
  everything the panels show: the ring of $|u|$ peaks at
  $\ell_r = 0.78\,\alpha$ and the whole core lies inside one filter length,
  so the filter flattens it into a plateau of width $\sim\alpha$ and, from
  here on, stops responding to its growth. The dark dotted line marks
  $r = \alpha$; the light dotted line, the outer edge of the construction,
  $r = \sqrt{2\nu\tau X_b}$. At this instant the two nearly coincide
  ($0.300$ and $0.286$), so the steep outer drop --- present identically in
  the unfiltered curve --- is the construction's window and not the filter.
  Static filter on the exact ansatz; not a simulation of the $\alpha$-model
  (see Scope).}
  \label{fig:piso-composite}
\end{figure}

\begin{figure}[t]
  \centering
  \includegraphics[width=\linewidth]{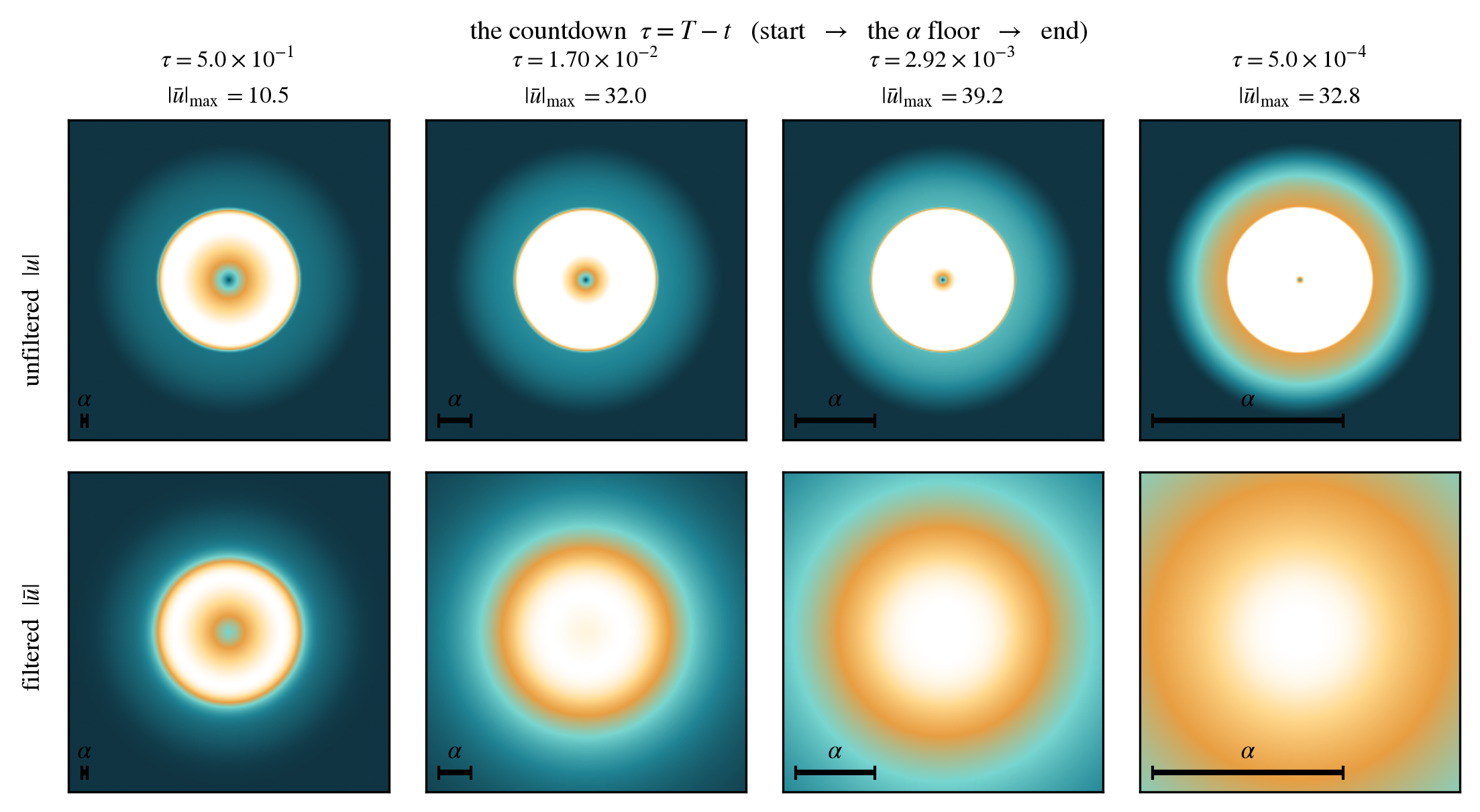}
  \caption{\textbf{The countdown $\tau = T - t$, unfiltered and filtered.}
  Four instants of the illustration of Figure~\ref{fig:piso-composite}: the
  start ($\tau = 5.0\times10^{-1}$), an intermediate frame
  ($1.70\times10^{-2}$), the floor ($2.92\times10^{-3}$) and the last frame
  ($5.0\times10^{-4}$). Top row, unfiltered $|u|$; bottom row, filtered
  $|\bar u|$ with $\alpha = 0.3$. Each column has its own colour scale, topped
  at that instant's filtered peak (printed above the column); white is
  saturation. The box follows the blob, $R = 1.15\sqrt{2\nu\tau X_{\max}}$,
  so the absolute scale is carried by the $\alpha$ bar: it grows from $2\%$
  of the width of the first frame to $60\%$ of the last, where it exceeds
  the half-width of the box ($\alpha = 0.3$ against $R = 0.252$) --- the
  whole last frame is $1.7\alpha$ across. The unfiltered peak grows
  $12.4 \to 69.4 \to 170.6 \to 419.5$; the filtered peak reaches its floor,
  $39.2$, and then \emph{decreases} to $32.8$ (Table~\ref{tab:piso-floor}).
  Same static-filter illustration as Figure~\ref{fig:piso-composite}.}
  \label{fig:piso-strip}
\end{figure}

\paragraph{What the figures show.}
Along the countdown the unfiltered peak diverges as $\tau^{-A}$, with
$A = \tfrac12 + h$ ($12.4 \to 419.5$ over the four instants of
Table~\ref{tab:piso-floor}), while the filtered peak rises to a finite
maximum, $39.2$ at $\tau_{\mathrm f} = 2.92\times10^{-3}$, and then turns
over, ending at $32.8$ while $|u|$ keeps growing; the ratio of the two peaks
falls from $0.85$ to $0.078$. The turning point is geometric. The core of the
ansatz has radial scale $\ell_r \propto \sqrt{\nu\tau}$, and the floor is
reached when it drops below the filter length ($\ell_r/\alpha = 0.78$ at
$\tau_{\mathrm f}$, $0.32$ at the end). Past that instant the filter no
longer resolves the ring: what it returns is an average of $|u|$ over a disc
of radius $\sim\alpha$, and the planar integral of the core,
$\propto \tau^{\,1-A} = \tau^{\,1/2-h}$, decreases towards the blow-up time. The filter therefore caps the filtered peak at a level set by
$\alpha$, and the dimensional form of the cap follows from the same two
scalings: the floor is reached when $\sqrt{\nu\tau}$ is a fixed fraction of
$\alpha$, and the amplitude there is $\tau_{\mathrm f}^{-A}$,
\begin{equation}
  \tau_{\mathrm f} = C\,\frac{\alpha^{2}}{\nu},
  \qquad
  \max|\bar u|_{\mathrm{floor}} \;\propto\; \tau_{\mathrm f}^{-A}
  \;\propto\; \alpha^{-(1+2h)} .
  \label{eq:piso-leyes}
\end{equation}
Both laws are measured in the same realization: over the five-point sweep
$\alpha \in \{0.15,\,0.212,\,0.3,\,0.424,\,0.6\}$ of
Table~\ref{tab:piso-floor} the $\log$--$\log$ slopes are $+2.006$ for
$\tau_{\mathrm f}$ against $\alpha$ and $-1.020$ for the floor level, against
the exponents $+2$ and $-(1{+}2h) = -1.02$ of Eq.~\eqref{eq:piso-leyes}.

\paragraph{What they do not show.}
These are not trajectories of the $\alpha$-model and say nothing about its
dynamics; they display the operator the model is built from, acting on the
exact blow-up solution. Two readings should be resisted. The outer decay in
panel~(c) of Figure~\ref{fig:piso-composite} (in page $10$) past $r \approx 0.29$, is the
edge of the construction --- the core ends at $X = X_b = 14$ and the field is
windowed smoothly to zero beyond it --- and not the filter's action.
And the fact that the floor time $\tau_{\mathrm f} = C\alpha^{2}/\nu$ and the
separant constant $\alpha^{2}\nu\,(m{+}4)!/m!$ are built from the same two
parameters is a coincidence of ingredients, not a theorem: the combinations
differ ($\alpha^{2}/\nu$ is the viscous diffusion time across one filter
length; $\alpha^{2}\nu$ is not a time), one belongs to the static filter and
the other to the differential ideal.

\begin{table}[t]
  \centering
  \small
  \begin{tabular}{lccccc}
  \toprule
  instant & $\tau$ & $\max|u|$ & $\max|\bar u|$ & $|\bar u|_{\max}/|u|_{\max}$ & $\ell_r/\alpha$ \\
  \midrule
  start        & $5.00\times10^{-1}$ &  12.38 & 10.50 & 0.848 & 10.21 \\
  intermediate & $1.70\times10^{-2}$ &  69.41 & 32.00 & 0.461 &  1.88 \\
  floor        & $2.92\times10^{-3}$ & 170.63 & 39.18 & 0.230 &  0.78 \\
  end          & $5.00\times10^{-4}$ & 419.47 & 32.76 & 0.078 &  0.32 \\
  \bottomrule
  \end{tabular}
  \caption{Readouts along the countdown for $h = 0.01$, $\nu = 1$,
  $\alpha = 0.3$: 48 geometrically spaced frames over
  $\tau \in [5\times10^{-1},\,5\times10^{-4}]$ on a $256\times256$ grid.
  $\ell_r = \sqrt{2\nu\tau X_{*}}$ is the radius at which $|u|$ peaks, i.e.\
  the radial scale of the core. The scaling laws quoted in the text come
  from a five-point sweep $\alpha \in \{0.15,\,0.212,\,0.3,\,0.424,\,0.6\}$
  with a finer scan of 160 instants per $\alpha$, so that the position of
  the maximum is not quantised by the frame grid: $\log$--$\log$ slopes
  $+2.006$ ($\tau_{\mathrm f}$ vs $\alpha$) and $-1.020$ (floor level vs
  $\alpha$).}
  \label{tab:piso-floor}
\end{table}

\paragraph{Method.}
The field is the exact construction of~\cite{openai2026navierstokes}
(eqs.~(4.3)/(4.5), (4.7), (4.29)), evaluated on the mid-plane $z=0$, where it
is exactly radial. The filter is the continuum Helmholtz operator
$(1-\alpha^{2}\Delta)^{-1}$ realized in free space, the natural domain of the
ansatz, by spectral evaluation on the figure's box zero-padded by $6\alpha$
on each side (periodic wrap-around error $\sim 2K_0(6) \approx 2.5\times10^{-3}$);
the panels are then cropped back to $[-R,R]^{2}$. The padding is essential
once $R \lesssim \alpha$: filtering the un-padded periodic box degenerates
into averaging over the box, and the floor disappears. The interactive demo
of the project realizes the same operator with finite differences in $y$ and
Dirichlet walls ($\bar u = 0$ at $y = \pm R$); the two realizations agree in
the core to $1.3\%$ at the floor and differ only near the walls (by up to
$36\%$ of the peak, on the outermost rows) at the last instants, when the
filtered halo reaches them. All quoted readouts are interior quantities. The
pulse measurements of Section~\ref{subsec:escudo} use the Dirichlet
realization on boxes aligned to the pulse, for which the prescribed pulse is
an exact mode of the operator.


\subsection*{The spectral shield in wavelength: the half point lies at
\texorpdfstring{$2\pi\alpha$}{2 pi alpha}}
\label{subsec:escudo}

The preceding figures measure the floor that the Helmholtz filter imposes on
the \emph{amplitude} of the blow-up. Theorem~1 (part~V) states the shield in
\emph{scale}: an oscillatory pulse that tries to shrink its period below the
filter scale is choked. The measurement below locates that boundary and puts
a number on the word ``choked''. The protection is not a threshold but a
smooth roll-off: a pulse of wavelength $\lambda$ is transmitted with the
factor
\begin{equation}
  \mathcal{T}(\lambda) \;=\; \frac{1}{1 + (2\pi\alpha/\lambda)^{2} + \alpha^{2}k_y^{2}},
  \qquad k_y = \frac{\pi}{2R},
  \label{eq:escudo-transfer}
\end{equation}
where the last term comes from the transverse envelope of the prescribed
pulse (it vanishes at the walls $y = \pm R$ of its measurement box) and is
negligible when the box is wide, $R \gg \alpha$. The half point,
$\mathcal{T} = 1/2$, lies at
\begin{equation}
  \lambda_{1/2} \;=\; 2\pi\alpha \;\approx\; 6.28\,\alpha ,
  \label{eq:escudo-medio}
\end{equation}
that is, at wavenumber $k = 1/\alpha$, where $\alpha^{2}k^{2} = 1$; at
$\lambda = \alpha$ the transmission is already down to
$\mathcal{T} = 1/(1+4\pi^{2}) = 2.5\%$. The two natural phrasings of the
shield are therefore not the same boundary: ``choked above $k = 1/\alpha$''
names the half point, $\lambda = 2\pi\alpha$, whereas ``choked below
$\lambda = \alpha$'' names a scale at which the pulse is already suppressed
forty-fold. The characteristic scale of the shield is $\alpha$ in
\emph{wavenumber} and $2\pi\alpha$ in \emph{wavelength}. In wavenumber the
measurement meets the linearised normal-mode analysis of the filtered
boundary-layer equations, whose shield family of wall-normal exponents is
anchored at $\pm1/\alpha$ independently of the streamwise wavenumber: the
same scale, here measured on the filter itself acting on the exact field
rather than read off the linearised symbol.

\begin{figure}[tbp]
  \small
  \centering
  \includegraphics[width=0.75\linewidth]{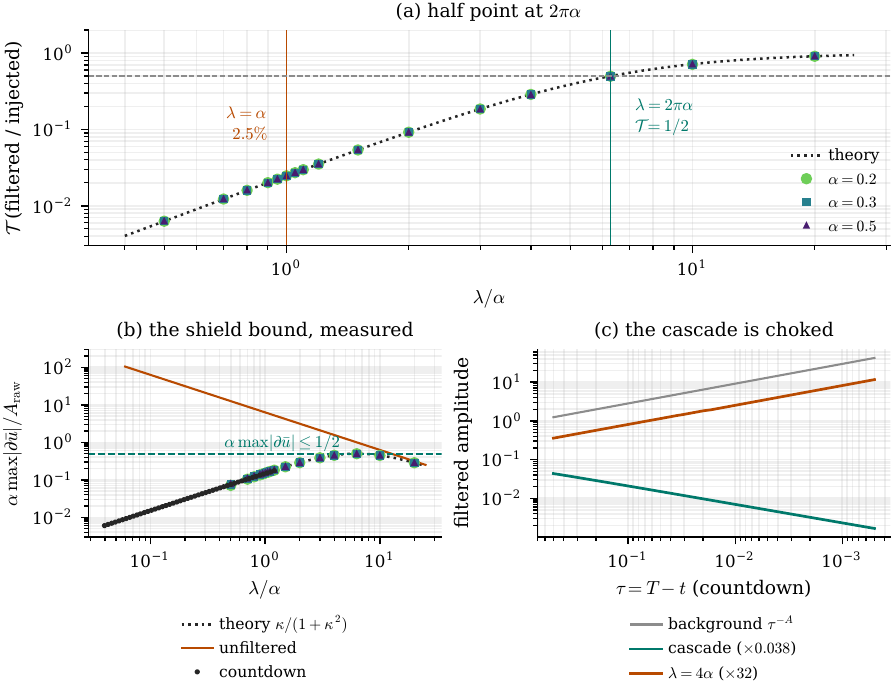}
  \caption{\textbf{The spectral shield of the Helmholtz filter, measured.}
  A prescribed oscillatory pulse of wavelength $\lambda$ and amplitude
  $A_{\mathrm{raw}} = \varepsilon\max|u|$, $\varepsilon = 0.1$, is superposed on the exact
  blow-up ansatz of~\cite{openai2026navierstokes} and passed through the
  Helmholtz filter $\bar u = (1-\alpha^{2}\Delta)^{-1}u$ of the
  Leray-$\alpha$ family~\cite{cheskidov2005}. The filter is linear, so the
  transmission of the pulse does not depend on the background; amplitudes are
  read by modal projection on boxes aligned to the pulse (at least two full
  wavelengths), so the measurement carries no spectral leakage.
  \textbf{(a)}~Transmission $\mathcal{T}$ (filtered over injected amplitude)
  against $\lambda/\alpha$ for $\alpha = 0.2,\,0.3,\,0.5$ (markers; the three
  collapse onto one curve) and Eq.~\eqref{eq:escudo-transfer} in the wide-box
  limit $k_y \to 0$ (dotted). The vertical lines mark $\lambda = \alpha$,
  where $\mathcal{T} = 2.5\%$, and $\lambda = 2\pi\alpha$, where
  $\mathcal{T} = 1/2$. Measured and predicted values agree to four significant
  figures from $\lambda = 0.5\alpha$ to $20\alpha$, as they must for an exact
  mode of the realized operator.
  \textbf{(b)}~The same data as a bound on the filtered gradient, normalised
  by the \emph{injected} amplitude: the points fall on $\kappa/(1+\kappa^{2})$,
  $\kappa = 2\pi\alpha/\lambda$, and never exceed the ceiling
  $\alpha\max|\partial\bar u|/A_{\mathrm{raw}} \le 1/2$, attained at
  $\lambda = 2\pi\alpha$,
  while the unfiltered gradient, $\kappa$, diverges as the pulse shortens.
  The small dots are the cascading pulse of panel~(c).
  \textbf{(c)}~The countdown $\tau = T - t$ of Figure~\ref{fig:piso-strip},
  $\alpha = 0.3$, with two pulses riding the blow-up: one that shortens
  geometrically, $\lambda_k = 1.2\alpha\,(0.93)^{k}$ over the 48 frames, and
  one held at $\lambda = 4\alpha$. The first ends at $0.038$ times its initial
  filtered amplitude although its injected amplitude grows $34$-fold; the
  second tracks the background $\tau^{-A}$ ($\times 32$ against $\times 34$).
  Static filter with a prescribed pulse on the exact ansatz; not a simulation
  of the $\alpha$-model dynamics.}
  \label{fig:escudo-rolloff}
\end{figure}

Panel~(c) of Figure~\ref{fig:escudo-rolloff} places the roll-off on the
countdown. The cascading pulse shortens by the factor $0.93$ per frame, which
is the contraction $\sqrt{\tau_{k+1}/\tau_k}$ of the blob's own box between
consecutive frames: it stays self-similar to the core, $\ell \propto
\sqrt{\tau}$, which is the structure the cascade of Theorem~1 (part~I) has to
sustain. Its injected amplitude, $\varepsilon\max|u|$, grows $34$-fold
over the 48 frames; its filtered amplitude ends at $0.038$ of its initial
value. The pulse held at $\lambda = 4\alpha$, on the other hand, is
transmitted at $\mathcal{T} \approx 0.29$ throughout and simply rides the
background ($\times 32$ against the background's $\times 34$; the small
deficit is the envelope term $\alpha^{2}k_y^{2}$, which grows as the
measurement box shrinks with the blob). The filter chokes the attempt to
shrink the scale and leaves the large-scale motion alone.

Two remarks on reading Eq.~\eqref{eq:escudo-transfer}. First, the cutoff is
soft. Below the half point the transmission decays quadratically,
$\mathcal{T} \simeq (\lambda/2\pi\alpha)^{2}$: it is $9.2\%$ at
$\lambda = 2\alpha$, $3.5\%$ at $1.2\alpha$, $2.5\%$ at $\alpha$ and $1\%$ at
$0.63\alpha$, with no scale at which the pulse is switched off. This is why
the qualitative statement of Theorem~1 --- below the filter scale a pulse is
choked --- holds, while the sharper reading of a cutoff \emph{at}
$\lambda = \alpha$ does not: nothing happens at $\alpha$ that does not also
happen, more gently, at $2\alpha$ and, more strongly, at $\alpha/2$. Second,
the same expression bounds the filtered derivatives. Normalising by the
injected rather than the filtered amplitude turns the roll-off into the
ceiling of panel~(b), $\alpha\max|\partial\bar u| \le A_{\mathrm{raw}}/2$, with
equality at
the half point $\lambda = 2\pi\alpha$; one order higher,
$\alpha^{2}\max|\partial^{2}\bar u| \le A_{\mathrm{raw}}$ for every $\lambda$, since
the
corresponding factor is $\kappa^{2}/(1+\kappa^{2}) < 1$. This second
bound is the wavelength-space counterpart both of the amplitude floor of the
preceding figures and of the $1/\alpha^{2}$ that caps the coefficients
$a_{m+4}$ in the recurrence of Theorem~1: whatever the wavelength of the
pulse, the curvature of the filtered field never exceeds the injected
amplitude divided by $\alpha^{2}$.

\newpage 

\section{Conclusions}
In this work, we have established a rigorous differential algebraic framework 
to study instabilities inherent in classical boundary layer formulations. 
By mapping the saturation monoids of the Leray-$\alpha$ boundary layers, 
we showed that applying the Helmholtz operator performs an \emph{Algebraic Surgery on the Boundary Layers}, fundamentally altering the system's differential structure. 
We proved that this differential filter modifies the system's separant from a vanishing dynamic variable into a strictly positive structural constant ($\alpha^2 \nu$). 
Because this regularized saturation monoid, $M_\alpha = \langle \alpha^2 \nu \rangle^\infty$, can never evaluate to zero, the singular branch of the ideal is mathematically forced to be the empty set ($\emptyset$). This guarantees the extraction of a globally regular Coherent Autoreduced Set via the Rosenfeld-Gröbner algorithm.

Physically, this constant separant acts as a shield. 
By mathematically capping all high-order spatial derivatives at $1/\alpha^2$, 
the system progressively constraints any oscillatory pulse attempting to shrink its spatial period below the $\alpha$-scale. Therefore, the high-frequency cascades required to drive finite-time velocity blowups such as those recently constructed for the Navier-Stokes equations are algebraically blocked. We formally prove, by reduction to the absurd, that such paradoxes cannot manifest in the regularized $\alpha$-model because the accumulation of rotational energy is topologically prevented.

  The accompanying measurements point the same way: on the exact
  self-similar blow-up, the filter imposes a finite floor on the filtered
  peak, and its transmission rolls off smoothly with half point at
  $\lambda = 2\pi\alpha$ --- the shield is a scale, not a threshold ---
  capping the filtered gradient at half the injected amplitude and choking a
  self-similarly contracting pulse ($\times 0.038$) while leaving a pulse
  held above the filter scale untouched. The singularity is excluded twice
  over: algebraically, because the separant never vanishes, and spectrally,
  because the regularized field cannot carry the cascade that would produce one.

Future investigations will focus on integrating these mathematically stable boundary coefficients into wall-modeled Large Eddy Simulations (LES)
of other boundary layers, providing a bridge between differential algebra and fluid dynamics.

\section*{Acknowledgments}
We gratefully acknowledge Universidad Iberoamericana Ciudad de México, and in
particular its Artificial Intelligence Laboratory, for access to the computing
infrastructure used for the experiments reported in this work. We also thank our
families, friends and colleagues for their constant support.

\bibliography{bibliography} 
\bibliographystyle{plainnat} %

\end{document}